\DeclareMathOperator*{\argmax}{arg\,max}
\newcommand{\R}{\mathbb{R}}
\newcommand{\ang}[1]{\langle #1 \rangle}
\newcommand{\ra}[1]{\renewcommand{\arraystretch}{#1}}
\begin{document}
\title*{On Low-Rank Convex-Convex Quadratic Fractional Programming}
\author{Ilya Krishtal \and  Brendan Miller}
\institute{Ilya Krishtal  
\and Brendan Miller 
\at Northern Illinois University, Dekalb, IL 60435 USA \\ \email{ikrishtal@niu.edu (IK), bmiller14@niu.edu (BM)} }

\maketitle
\abstract{
    We present an efficient algorithm for solving fractional programming problems whose objective functions are the ratio of a low-rank quadratic to a positive definite quadratic with convex constraints. The proposed algorithm for these convex-convex problems is based on the Shen-Yu Quadratic Transform \cite{shen_yu_2018} which finds stationary points of concave-convex sum-of-ratios problems. We further use elements of the algorithm proposed in \cite{shen_yu_2018} and the classic Dinkelbach approach to ensure convergence. We show that our algorithm performs better than previous algorithms for low-rank problems.    
}
\section{Introduction}
Methods of fractional programming encompass a large range of techniques to solve problems of the form 
\begin{equation}\label{sumofratiosproblem}
\begin{aligned}
\max &\quad F(x) = \sum_{m = 1}^M \frac{N_m(x)}{D_m(x)} \\
\text{s.t.} &\quad x \in X \subset \R^n
\end{aligned}
\end{equation}
where $N_m(x), D_m(x): \R^n \to \R$ are continuous functions, and $X$ is a closed, convex set. A convention of the field, which we adopt throughout this paper, is that $N_m(x) \geq 0$ and $D_m(x) > 0$ for all $x \in X$. The problem is called \emph{single-ratio} if $M = 1$ and a \emph{sum-of-ratios} fractional programming problem if $M > 1$. Fractional programming problems arise in many different applications, such as finance and portfolio analysis, government contracting, and engineering (see \cite{benson_2002,benson1_2002,lo_mackinlay_1997,jiao_liu_2017}). We were led to a single-ratio problem in the process of studying the problem of optimal sensor placement for dynamical sampling on graphs \cite{HNT22}. In particular, the relative error of reconstructing a signal on a graph from spatio-temporal samples in the presence of noise can be bounded above by a ratio of two quadratics depending on the eigenvalues of a certain frame operator.

Fractional programs have a long history. Below we outline a few milestones in the development of the theory.

In 1962, Charnes and Cooper showed in \cite{charnes_cooper_1962} that linear fractional programming problems, which are single-ratio fractional programming problems whose numerator, denominator, and constraints are linear, can be solved by simplex method. 

In 1967, Dinkelbach \cite{dinkelbach_1967} showed that any concave-convex single-ratio fractional programming problem can be solved efficiently by consecutively solving several concave maximization problems. The approach used by Dinkelbach was so useful that it has become standard and even seen many applications to fractional programs which are not concave-convex. In the latter case, however, the method involves solving a succession of non-concave maximization problems and becomes rather expensive.

In 1997, Lo and MacKinlay \cite{lo_mackinlay_1997} proposed the problem of maximizing the ratio of two convex quadratic functions in the context of portfolio analysis. The following decade saw two papers published \cite{gotoh_konno_2001, yamamoto_konno_2007} which presented algorithms for solving this problem exactly. Both follow the standard Dinkelbach approach, with different methods of handling the most expensive aspect of the problem: the need to solve multiple non-convex quadratic programming problems. To do so, the former paper by Gotoh and Konnoh implemented a branch-and-bound technique while the latter by Yamamoto and Konnoh iteratively approximated the quadratic by a piece-wise linear function, which is maximized by standard mixed integer linear programming techniques.

In recent years, there has also been much research on the sum-of-ratios problem. For example, \cite{benson_2002,benson1_2002,gao_mishra_shi_2010} all produce branch-and-bound type algorithms for solving the concave-convex sum-of-ratios problem. Similarly, \cite{qu_zhang_zhao_2007,jiao_liu_2017} propose efficient algorithms for solving quadratically constrained quadratic sum-of-ratios type problems. Of particular interest to us is the Shen-Yu Quadratic Transform introduced in \cite{shen_yu_2018} which, like the Dinkelbach method, iteratively solves a concave programming problem to converge to a stationary point of the concave-convex sum-of-ratios problem.  

In this paper, we propose an algorithm for solving the single-ratio convex-convex quadratic programming problem which can be effectively utilized when the numerator has low rank. We divide the feasible region into several subregions in which the Shen-Yu Quadratic Transform method can be applied successively until global convergence. Since the methods of Shen and Yu only guarantee convergence to a stationary point, we use the mixed integer linear programming techniques employed in \cite{yamamoto_konno_2007} to check if a given stationary point is a global optimum. Although, as we will see empirically, this rather expensive procedure can often be omitted altogether (with a minimal chance of error). This yields a very efficient algorithm which, with high probability, converges to the global solution of a convex-convex quadratic programming problem by successively solving concave maximization problems. The only known algorithms for solving such problems involve successively maximizing a nonconvex quadratic programming problems. 

The remainder of this paper is organized as follows. In Section \ref{basic}, we state the problem we focus on and outline the classic Dinkelbach approach for ratio maximization. We then convert our problem into a sum-of-ratios problem and recall a recent Shen-Yu scheme for solving such problems. Section \ref{mains} is the centerpiece of the paper. In it, we present a natural way of subdividing the feasible region of our problem into a number of subregions thereby replacing a convex-convex sum-of-ratios problem with a finite number of much simpler concave-convex sum-of-ratios problems. This results in a region checking algorithm (Algorithm \ref{our_algo}) which encompasses our approach to solving single-ratio quadratic convex-convex problems. Numerical experiments described in Section \ref{num} illustrate the effectiveness of our approach in comparison with other algorithms. Finally, concluding remarks are presented in Section \ref{conc}.

\section{Basic Analysis}\label{basic}
We study the problem of the form

\begin{equation}\label{original_problem}
\begin{aligned}
\max &\quad F(x) = \frac{x^T Q x}{x^T P x} \\
\text{s.t.} &\quad Ax \leq b
\end{aligned}
\end{equation}
where $Q$ is an $n \times n$ positive semidefinite matrix, $P$ is an $n \times n$ positive definite matrix, $A \in \R^{T \times n}$, $b \in \R^T$. We will denote the feasible region of this problem by $X \subset \R^n$. Such optimization problems are nonconcave in general, and thus require expensive algorithms to solve. We recall the standard Dinkelbach method which utilizes the function 
\[
\pi(\lambda) := \max_{x \in X} \; x^T(Q - \lambda P)x, \;\; \lambda > 0. 
\]
It is convenient to introduce the following notation:
\[
x(\lambda) = \argmax_{x \in X} \; x^T(Q - \lambda P)x, \;\; \lambda > 0.
\]

\begin{theorem}
The function $\pi(\lambda)$ is convex and strictly decreasing in $\lambda$. Furthermore, $\pi(\lambda) = 0$ if and only if $x(\lambda)$ maximizes \emph{(\ref{original_problem})} in $X$. 
\end{theorem}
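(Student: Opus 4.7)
The plan is to handle the three claims separately, in the order convexity, monotonicity, equivalence.

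\textbf{Convexity.} For each fixed $x \in X$, the map $\lambda \mapsto x^T(Q-\lambda P)x = x^TQx - \lambda\,x^TPx$ is affine in $\lambda$. The function $\pi(\lambda)$ is therefore the pointwise supremum over $x \in X$ of a family of affine functions of $\lambda$, which is automatically convex on $(0,\infty)$. This step is routine and should just be stated.

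\textbf{Strict monotonicity.} Pick $0 < \lambda_1 < \lambda_2$ and let $x_2 := x(\lambda_2)$, so that $\pi(\lambda_2) = x_2^TQx_2 - \lambda_2\,x_2^TPx_2$. By the feasibility of $x_2$ and the definition of $\pi(\lambda_1)$,
\[
\pi(\lambda_1) \;\geq\; x_2^TQx_2 - \lambda_1\,x_2^TPx_2 \;=\; \pi(\lambda_2) + (\lambda_2-\lambda_1)\,x_2^TPx_2.
\]
The key point is that $x_2^TPx_2 > 0$ strictly: this uses both the positive definiteness of $P$ and the standing convention of the paper that $D(x) = x^TPx > 0$ on $X$, which rules out $x_2 = 0$. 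Hence $\pi(\lambda_1) > \pi(\lambda_2)$. I expect this to be the most delicate step, since a naive attempt would only give $\pi(\lambda_1) \geq \pi(\lambda_2)$; the strictness depends on the standing assumption that $0 \notin X$.

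\textbf{The zero condition.} Because $x^TPx > 0$ on $X$, for each fixed $\lambda > 0$ we have
\[
x^T(Q-\lambda P)x \leq 0 \iff F(x) = \frac{x^TQx}{x^TPx} \leq \lambda.
\]
Suppose $\pi(\lambda) = 0$. Then $F(x) \leq \lambda$ for every $x \in X$, and the maximizer $x(\lambda)$ achieves $x(\lambda)^T(Q-\lambda P)x(\lambda) = 0$, i.e.\ $F(x(\lambda)) = \lambda$. So $x(\lambda)$ is a global maximizer of $F$ on $X$. Conversely, if $x(\lambda)$ maximizes $F$ on $X$, denote that maximum value by $\mu$. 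The forward direction applied to any $\lambda' > 0$ with $\pi(\lambda') = 0$ (which exists and is unique by convexity and strict monotonicity once existence of a feasible point is granted) would give $F(x(\lambda')) = \mu$, and combining this with the strict monotonicity yields $\lambda = \lambda'$, hence $\pi(\lambda)=0$. A cleaner way is to argue directly: if $x(\lambda)$ maximizes $F$ on $X$, then $F(x) \leq F(x(\lambda))$ for all $x \in X$, so $x^T(Q-F(x(\lambda))P)x \leq 0$ on $X$ with equality at $x(\lambda)$; thus $\pi(F(x(\lambda))) = 0$, and by strict monotonicity the unique zero of $\pi$ is this value, forcing $\lambda = F(x(\lambda))$ and $\pi(\lambda)=0$. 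This closes the equivalence.
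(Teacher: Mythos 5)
The paper itself gives no proof of this theorem; it is recalled as Dinkelbach's classical result, so there is no in-paper argument to compare against. Judged on its own merits, your convexity step (pointwise supremum of affine functions of $\lambda$) and your strict-monotonicity step are correct, and you rightly flag that strictness hinges on $x(\lambda_2)^TPx(\lambda_2)>0$, which follows from the paper's standing convention that $x^TPx>0$ on $X$ (so $0\notin X$). The forward implication of the zero condition is also fine: $\pi(\lambda)=0$ forces $F\leq\lambda$ on $X$ with equality at $x(\lambda)$, so $x(\lambda)$ is a global maximizer.

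The converse direction, however, has a genuine gap: it is circular. You correctly derive $\pi\bigl(F(x(\lambda))\bigr)=0$, but the step ``by strict monotonicity the unique zero of $\pi$ is this value, forcing $\lambda=F(x(\lambda))$'' assumes what you are trying to prove — uniqueness of the zero only lets you conclude $\lambda=F(x(\lambda))$ if you already know $\pi(\lambda)=0$. Your first sketch of the converse has the same defect: knowing $F(x(\lambda'))=\mu=F(x(\lambda))$ says nothing about $\lambda$ versus $\lambda'$. In fact, the converse as literally stated is false in degenerate cases: take $n=1$, $X=[1,2]$, $Q=q>0$, $P=p>0$, so $F\equiv q/p$ and every feasible point (in particular $x(\lambda)$) maximizes $F$ for every $\lambda>0$, yet $\pi(\lambda)=0$ only at $\lambda=q/p$. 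The statement should be read (as it is used later in the paper, where $\lambda$ is always taken to be $F(x^*)$ for a candidate $x^*$) as: $\pi$ has a unique zero $\lambda^*$, this $\lambda^*$ equals $\max_{x\in X}F(x)$, and $x(\lambda^*)$ attains that maximum. Under that reading your forward argument plus strict monotonicity already gives the full result; the literal ``only if'' as you tried to prove it cannot be salvaged.
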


The algorithms of \cite{gotoh_konno_2001, yamamoto_konno_2007} are root-finding algorithms which use a scheme developed by Ibaraki \cite{ibaraki_1983} to search for the root of $\pi$. These algorithms become expensive because computing $\pi(\lambda)$ is a nonconvex quadratic programming problem, whose difficulty is larger for smaller values of $\lambda$. We will utilize this Theorem of Dinkelbach in our algorithm, but only to check if a local maximum of (\ref{original_problem}) is indeed a global maximum. We summarize the Ibaraki scheme here as Algorithm \ref{ibaraki} for reference, but omit the explanation of convergence.

\IncMargin{1em}
\begin{algorithm}
\SetKwData{Left}{left}\SetKwData{This}{this}\SetKwData{Up}{up}
\SetKwFunction{Union}{Union}\SetKwFunction{FindCompress}{FindCompress}
\SetKwInOut{Input}{input}\SetKwInOut{Output}{output}
\Input{Matrices $Q$ and $P$, linear inequality constraints $Ax \leq b$, and a tolerance $\varepsilon$.}
\BlankLine
Find $\lambda^u$ with $\pi(\lambda^u) < 0$ and $\lambda^l$ with $\pi(\lambda^l) > 0$\;
\Repeat{$\vert \pi(\overline{\lambda})\vert < \varepsilon$}{
Compute $\overline{\lambda}$ as follows:
\begin{equation*}
    \overline{\lambda} = \begin{cases}
     -\frac{\pi(\lambda^u)}{\Delta \pi} + \lambda^u & \text{if } x(\lambda^u)^TPx(\lambda^u) + \Delta \pi \neq 0\\
     \frac{\pi(\lambda^u)}{x(\lambda^u)^TPx(\lambda^u)} + \lambda^u & \text{otherwise,}
    \end{cases}
\end{equation*}
where $\Delta \pi = (\pi(\lambda^u) - \pi(\lambda^l))/(\lambda^u - \lambda^l)$\;
Compute $\pi(\overline{\lambda})$\;
Update $\lambda^l = \overline{\lambda}$ if $\pi(\overline{\lambda}) > 0$, or $\lambda^u = \overline{\lambda}$ if $\pi(\overline{\lambda}) < 0$\;
}
\caption{Interpolated Binary Search (Ibaraki scheme)}\label{ibaraki}
\end{algorithm}\DecMargin{1em}

The algorithms presented in \cite{gotoh_konno_2001,yamamoto_konno_2007} both utilize Algorithm \ref{ibaraki}, but employ different methods of computing $\pi(\overline{\lambda})$. We will use contemporary software to solve these non-convex quadratic programming problems to ensure the most accurate solutions. 

Suppose now that $Q$ can be written as
\[
Q = \sum_{m = 1}^M q_mq_m^T,
\]
which is a sum of rank-one matrices. Note that $Q$ always admits such a decomposition as we can take $M = rank(Q)$ and $q_m = \sqrt{\lambda_m}v_m $ where $v_m$ is the eigenvector of $Q$ associated to the nonzero eigenvalue $\lambda_m$. Then we may rewrite the objective function in the following way:
\begin{equation}\label{rearrange}
\begin{aligned}
F(x) &= \frac{\sum_{m = 1}^M x^T(q_mq_m^T)x}{x^TPx} \\
     &= \sum_{m = 1}^M \frac{\ang{q_m,x}^2}{x^TPx}.
\end{aligned}
\end{equation}
This reformulation of the objective function converts a single-ratio fractional programming problem \eqref{original_problem} into a convex-convex sum-of-ratios fractional programming problem which we will refer to as \eqref{rearrange}. 

Next, we summarize a method for suboptimally solving sum-of-ratios fractional programming problems (\ref{sumofratiosproblem}) that was introduced in \cite{shen_yu_2018}.

\begin{definition}[See \cite{shen_yu_2018}]
Given the sum-of-ratios fractional programming problem \emph{(\ref{sumofratiosproblem})}, the Shen-Yu Quadratic Transform of this problem is defined to be 
\begin{equation}\label{quadtrans}
g(x,y) = \sum_{m = 1}^M \left(2y_m\sqrt{N_m(x)} - y_m^2 D_m(x)\right). 
\end{equation}
\end{definition}

\begin{theorem}[\cite{shen_yu_2018}]\label{equivmax} The sum-of-ratios problem \emph{(\ref{sumofratiosproblem})} and its quadratic transform \emph{(\ref{quadtrans})} maximization problem are equivalent. More precisely, the maximum of \emph{(\ref{quadtrans})} occurs at $(x^*,y^*)$ where $x^*$ maximizes \emph{(\ref{sumofratiosproblem})} and $y^* = (y_m^*)_{m = 1}^M$ satisfies
\[
y_m^* = \frac{\sqrt{N_m(x^*)}}{D_m(x^*)}.
\]
\end{theorem}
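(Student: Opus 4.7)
The plan is to exploit the fact that, for each fixed $x \in X$, the quadratic transform $g(x,y)$ is separable and strictly concave in $y$, so the inner maximization over $y$ can be carried out in closed form. Since $D_m(x) > 0$ by the standing convention, each summand $2y_m\sqrt{N_m(x)} - y_m^2 D_m(x)$ is a downward-opening parabola in the scalar variable $y_m$, and hence attains a unique unconstrained maximum.

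First I would compute this inner maximizer. Setting $\partial g/\partial y_m = 2\sqrt{N_m(x)} - 2y_m D_m(x) = 0$ yields $y_m^\circ(x) = \sqrt{N_m(x)}/D_m(x)$, which is well defined because $D_m(x) > 0$ and $N_m(x) \geq 0$. Substituting this back into $g$ gives
\[
g(x, y^\circ(x)) = \sum_{m=1}^M \left( 2\,\frac{N_m(x)}{D_m(x)} - \frac{N_m(x)}{D_m(x)} \right) = \sum_{m=1}^M \frac{N_m(x)}{D_m(x)} = F(x).
\]
Thus, for every $x \in X$, $\max_{y \in \R^M} g(x,y) = F(x)$, with the unique maximizer in $y$ given by the stated formula.

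Now I would use the standard interchange of suprema to finish. Since
\[
\sup_{x \in X,\, y \in \R^M} g(x,y) = \sup_{x \in X} \sup_{y \in \R^M} g(x,y) = \sup_{x \in X} F(x),
\]
the two problems have the same optimal value. Moreover, any joint maximizer $(x^*, y^*)$ of $g$ must satisfy $y^* = y^\circ(x^*)$ (by uniqueness of the inner maximizer) and $x^*$ must attain $\sup_{x \in X} F(x)$, which is precisely the claim.

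There is essentially no obstacle here: the proof is a one-variable completion of the square followed by an interchange of suprema. The only point requiring a brief comment is well-posedness of the closed-form $y_m^\circ$, which is guaranteed by the positivity conventions $D_m > 0$ and $N_m \geq 0$ adopted at the outset; without these, the inner problem could be unbounded above or the formula for $y_m^*$ undefined.
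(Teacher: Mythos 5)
Your proof is correct. Note that the paper itself does not prove Theorem~\ref{equivmax} at all --- it is stated as a cited result from \cite{shen_yu_2018} --- so there is no in-paper argument to compare against line by line. That said, your two ingredients are exactly the ones the paper develops separately for its later results: the closed-form inner maximizer $y_m^\circ(x) = \sqrt{N_m(x)}/D_m(x)$ is precisely Lemma~\ref{y_lemma}, and the substitution identity $g(x, y^\circ(x)) = F(x)$ is used implicitly in the first and last lines of the paper's proof of Theorem~\ref{shenyuthm} (where $F(x_0) = g(x_0,y_0)$ and $g(x^*,y^*) = F(x^*)$). Your interchange-of-suprema step then packages these into the equivalence claim, and the uniqueness of the inner maximizer (strict concavity in each $y_m$, since $D_m(x)>0$) correctly pins down $y^*$. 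The only point worth flagging is existence: the conclusion that ``the maximum occurs at $(x^*,y^*)$'' presupposes that $F$ attains its supremum on $X$, which holds under the compactness assumption the paper imposes on $X$ in Algorithm~\ref{quad_trans_algo} but is not guaranteed by closedness and convexity alone; your argument correctly establishes equality of suprema and the correspondence of maximizers whenever they exist, which is all that can be said in general.
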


The following Lemma is useful for deriving the Shen-Yu scheme.

\begin{lemma}\label{y_lemma}
Let $g(x,y)$ be as in \emph{(\ref{quadtrans})}. If $x_0 \in X$, then $y_0 := \argmax_{y \in \R^M} g(x_0,y)$ can be found analytically and is given by $y_0 = (y_m^0)_{m = 1}^M$, where
\[
y_m^0 = \frac{\sqrt{N_m(x_0)}}{D_m(x_0)}.
\]
\end{lemma}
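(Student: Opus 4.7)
The plan is to exploit the fact that $g(x_0, y)$, viewed as a function of $y$ alone, decouples completely across the index $m$. Explicitly,
\[
g(x_0, y) = \sum_{m=1}^M \bigl(2 y_m \sqrt{N_m(x_0)} - y_m^2 D_m(x_0)\bigr),
\]
so the maximization over $y \in \R^M$ reduces to $M$ independent one-dimensional problems, one for each $y_m \in \R$.

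Next, I would fix $m$ and examine the scalar function $\varphi_m(t) = 2t\sqrt{N_m(x_0)} - t^2 D_m(x_0)$. By the standing convention $D_m(x_0) > 0$, so the coefficient of $t^2$ is strictly negative, making $\varphi_m$ a strictly concave quadratic in $t$. Hence $\varphi_m$ has a unique global maximizer on $\R$, obtained by solving $\varphi_m'(t) = 0$, i.e.\
\[
2\sqrt{N_m(x_0)} - 2 t\, D_m(x_0) = 0,
\]
which yields $t = \sqrt{N_m(x_0)}/D_m(x_0)$. Setting $y_m^0 := \sqrt{N_m(x_0)}/D_m(x_0)$ for each $m$ then gives the claimed formula, and strict concavity guarantees that $y_0 = (y_m^0)_{m=1}^M$ is the unique maximizer.

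There is no real obstacle here; the only subtlety worth flagging is the nonnegativity of $N_m$ and positivity of $D_m$ on $X$, which we appealed to in order to (i) make $\sqrt{N_m(x_0)}$ well-defined and real, and (ii) ensure strict concavity of $\varphi_m$. Both follow from the conventions stated just after problem~\eqref{sumofratiosproblem}, so the argument is complete in a few lines.
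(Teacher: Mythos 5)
Your proposal is correct and follows essentially the same route as the paper: decouple the maximization over $y$ into $M$ independent scalar problems and locate the vertex of each concave quadratic $2y_m\sqrt{N_m(x_0)} - y_m^2 D_m(x_0)$. You simply spell out the strict concavity (via $D_m(x_0)>0$) and uniqueness more explicitly than the paper does, which is a harmless refinement.
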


\begin{proof}
It suffices to maximize each term in the sum in \eqref{quadtrans} individually. Clearly, the vertex of the quadratic $f(y_m) = 2y_m\sqrt{N_m(x_0)} - y_m^2 D(x_0)$ occurs at the point 
\[
y_m^0 = \frac{\sqrt{N_m(x_0)}}{D_m(x_0)}
\]
as desired.
\end{proof}

We may now state and prove a theorem from which, when taken together with Theorem \ref{equivmax}, an algorithm for suboptimally solving (\ref{sumofratiosproblem}) is naturally derived. 

\begin{theorem}\label{shenyuthm}{\emph{(Shen, Yu \cite{shen_yu_2018})}}
Consider the sum-of-ratios problem \emph{(\ref{sumofratiosproblem})} and suppose that $x_0 \in X$. Let $y_0 = (y_m^0)_{m=1}^M$ with $y_m^0 = \sqrt{N_m(x_0)}/D_m(x_0)$. If 
\[
x^* = \argmax_{x \in X} g(x,y_0) = \argmax_{x \in X} \sum_{m = 1}^M\left( 2y_m^0\sqrt{N_m(x)} - (y_m^0)^2 D_m(x)\right),
\]
then $F(x^*) \geq F(x_0)$. 
\end{theorem}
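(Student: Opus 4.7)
The plan is to prove Theorem \ref{shenyuthm} via a short chain of three inequalities, sandwiching $F(x_0)$ and $F(x^*)$ between values of $g$ at cleverly chosen arguments. The key identity that makes everything work is that when $y$ is chosen optimally for a given $x$ (as in Lemma \ref{y_lemma}), the value $g(x,y)$ reduces exactly to $F(x)$. So first I would verify this directly: plugging $y_m^0 = \sqrt{N_m(x_0)}/D_m(x_0)$ into \eqref{quadtrans} gives
\[
g(x_0, y_0) = \sum_{m=1}^M \left( 2 \cdot \frac{\sqrt{N_m(x_0)}}{D_m(x_0)} \cdot \sqrt{N_m(x_0)} - \frac{N_m(x_0)}{D_m(x_0)^2} D_m(x_0) \right) = \sum_{m=1}^M \frac{N_m(x_0)}{D_m(x_0)} = F(x_0).
\]

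Next I would carry out the sandwich. By the definition of $x^*$ as the maximizer of $g(\cdot, y_0)$ over $X$, and the fact that $x_0 \in X$, we get
\[
g(x^*, y_0) \geq g(x_0, y_0) = F(x_0).
\]
Now I need an upper bound on $g(x^*, y_0)$ in terms of $F(x^*)$. For this, I apply Lemma \ref{y_lemma} at the point $x^*$: the vector $y^* = (\sqrt{N_m(x^*)}/D_m(x^*))_m$ is the unconstrained maximizer of $g(x^*, \cdot)$ over $\R^M$. Therefore
\[
g(x^*, y_0) \leq g(x^*, y^*) = F(x^*),
\]
where the last equality uses the same direct computation as above but at $x^*$. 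Chaining the two inequalities yields $F(x_0) \leq F(x^*)$, as required.

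There is really no hard step here, just some bookkeeping. The one thing to be a little careful about is invoking Lemma \ref{y_lemma} at $x^*$: the lemma is stated for an arbitrary $x_0 \in X$, and since $x^* \in X$ by construction (as a maximizer over $X$), the application is legitimate. I would also note, for clarity, that the argument does not require $x^*$ to be unique, nor does it invoke Theorem \ref{equivmax}; the monotonicity $F(x^*) \geq F(x_0)$ is built out of Lemma \ref{y_lemma} alone, which is precisely what makes this iteration a genuine ascent scheme and the basis for the Shen--Yu algorithm.
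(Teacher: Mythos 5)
Your proof is correct and follows essentially the same argument as the paper: the chain $F(x_0) = g(x_0,y_0) \leq g(x^*,y_0) \leq g(x^*,y^*) = F(x^*)$, using the maximality of $x^*$ for the first inequality and Lemma \ref{y_lemma} for the second. You additionally spell out the direct computation $g(x,y) = F(x)$ at the optimal $y$, which the paper leaves implicit.
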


\begin{proof}
Let $x_0,y_0,$ and $x^*$ be as above and set 
\[
y_m^* = \frac{\sqrt{N_m(x^*)}}{D_m(x^*)}
\]
and $y^* = (y_m^*)_{m = 1}^M$. Then we have the following string of inequalities
\[
\begin{aligned}
F(x_0) &= g(x_0,y_0) \\
       &\leq g(x^*,y_0) \\
       &\leq g(x^*,y^*) \\
       &= F(x^*)
\end{aligned}
\]
where the third line follows from Lemma \ref{y_lemma}, and the last by a direct computation. The proof is complete.
\end{proof}

The above Theorem guarantees that replacing $x_0$ by $x^*$ and $y_0$ by $y^*$ improves the value of the objective function with each iteration, and thus this method (with the scheme written explicitly in Algorithm \ref{quad_trans_algo}) converges. It is clear that the algorithm  
converges to a local maximum, say $(x^*,y^*)$, of the Quadratic Transform $g$. It follows easily that the value $x^*$ is indeed a local maximum of the objective $F$ as well. 

\IncMargin{1em}
\begin{algorithm}
\SetKwData{Left}{left}\SetKwData{This}{this}\SetKwData{Up}{up}
\SetKwFunction{Union}{Union}\SetKwFunction{FindCompress}{FindCompress}
\SetKwInOut{Input}{input}\SetKwInOut{Output}{output}
\Input{Functions $N_m$ and $D_m$, and a compact, convex set $X$.}
\BlankLine
Find an initial $x_0 \in X$\;
\Repeat{\emph{convergence}}{
set $y_m^0 = \sqrt{N_m(x_0)}/D_m(x_0)$\;
solve $x^* = \argmax_{x \in X}{\sum_{m = 1}^M \left(2y_m^0\sqrt{N_m(x)} - (y_m^0)^2 D_m(x)\right)}$\;
update $x_0 = x^*$\;
}
\caption{Shen-Yu Iterative Algorithm}\label{quad_trans_algo}
\end{algorithm}\DecMargin{1em}

\begin{corollary}\label{shenyucor}
Algorithm \emph{\ref{quad_trans_algo}} converges to a stationary point of the sum-of-ratios fractional programming problem \emph{(\ref{sumofratiosproblem})}.
\end{corollary}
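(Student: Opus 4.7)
The plan is to combine a monotone convergence argument on the objective values with a subsequential compactness argument on the iterates, then identify the limit as a stationary point of $F$ via a gradient identity.

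First, let $\{x_k\} \subset X$ be the iterates produced by Algorithm \ref{quad_trans_algo}, with auxiliary $y_k = (y_m^k)_{m=1}^M$, $y_m^k = \sqrt{N_m(x_k)}/D_m(x_k)$. Theorem \ref{shenyuthm} gives $F(x_{k+1}) \geq F(x_k)$, so the sequence $\{F(x_k)\}$ is monotone non-decreasing. Since $F$ is continuous on the compact feasible region $X$, it is bounded above, and hence $F(x_k) \to F^*$ for some $F^* \in \R$. By compactness of $X$, I would extract a convergent subsequence $x_{k_j} \to \bar{x}$; continuity of $N_m, D_m$ together with $D_m > 0$ yields $y_{k_j} \to \bar{y}$, where $\bar{y}_m = \sqrt{N_m(\bar{x})}/D_m(\bar{x})$.

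Next, I would show that $\bar{x}$ maximizes $g(\cdot, \bar{y})$ over $X$. By construction, $x_{k_j+1} = \argmax_{x \in X} g(x, y_{k_j})$, and the chain of inequalities in the proof of Theorem \ref{shenyuthm} gives
\[
F(x_{k_j}) \leq g(x_{k_j+1}, y_{k_j}) \leq F(x_{k_j+1}).
\]
The outer quantities both tend to $F^*$, so $\max_{x \in X} g(x, y_{k_j}) \to F^* = g(\bar{x}, \bar{y})$. Continuity of $g$ in $y$, uniformly in $x \in X$, then gives $\max_{x \in X} g(x, y_{k_j}) \to \max_{x \in X} g(x, \bar{y})$, so $g(\bar{x}, \bar{y}) = \max_{x \in X} g(x, \bar{y})$.

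Finally, under the concave-convex structure on which the Shen--Yu transform rests, $g(\cdot, \bar{y})$ is concave in $x$, so the first-order optimality condition for $\bar{x}$ reads $\nabla_x g(\bar{x}, \bar{y})^T(x - \bar{x}) \leq 0$ for all $x \in X$. A direct calculation after substituting $\bar{y}_m = \sqrt{N_m(\bar{x})}/D_m(\bar{x})$ yields
\[
\nabla_x g(\bar{x}, \bar{y}) = \sum_{m=1}^M \frac{D_m(\bar{x})\nabla N_m(\bar{x}) - N_m(\bar{x})\nabla D_m(\bar{x})}{D_m(\bar{x})^2} = \nabla F(\bar{x}),
\]
so the variational inequality above is precisely the stationarity condition for $F$ at $\bar{x}$. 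The main obstacle I expect is the limiting argument in the second paragraph, namely verifying that $\max_{x \in X} g(x, y_{k_j}) \to \max_{x \in X} g(x, \bar{y})$ and that the limit $\bar{x}$ is a true maximizer of $g(\cdot, \bar{y})$ — essentially an application of a Berge-type maximum theorem to the parametric family $y \mapsto \max_{x\in X} g(x,y)$. Everything else is routine monotone/continuity bookkeeping together with the gradient identity above.
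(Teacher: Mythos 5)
Your argument is correct and follows the same outline the paper itself sketches (the paper offers no formal proof, only the remark that monotone improvement of $F(x_k)$ forces convergence to a maximum of $g$ and hence of $F$); what you add is the bookkeeping the paper waves away: the compactness/subsequence extraction, the sandwich $F(x_{k_j}) \leq \max_{x\in X} g(x,y_{k_j}) \leq F(x_{k_j+1})$ combined with continuity of the value function to identify $\bar{x}$ as a maximizer of $g(\cdot,\bar{y})$, and the gradient identity $\nabla_x g(\bar{x},\bar{y}) = \nabla F(\bar{x})$ that converts the variational inequality for $g$ into stationarity of $F$. Two small caveats: your argument, correctly, only shows that every accumulation point of $\{x_k\}$ is stationary (which is all the corollary can really mean, since convergence of the iterates themselves is not established), and the gradient identity requires $N_m(\bar{x})>0$ so that $\sqrt{N_m}$ is differentiable at $\bar{x}$ --- a point that matters for the paper's quadratic case $N_m(x)=\langle q_m,x\rangle^2$, where $\sqrt{N_m}=|\langle q_m,x\rangle|$ is nonsmooth on the hyperplane $\langle q_m,x\rangle=0$, and that neither you nor the paper addresses.
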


Although Algorithm \ref{quad_trans_algo} is guaranteed to converge to a stationary point, its usefulness is limited to the difficulty of maximizing the Quadratic Transform over the variable $x$. In the case when each $N_m$ is concave and each $D_m$ is convex, this can be done by any method of concave programming. When the objective has the form (\ref{rearrange}), the Quadratic Transform becomes 
\begin{equation}\label{quad_trans_rearrange}
g(x,y) = \sum_{m = 1}^M 2y_m\vert \ang{q_m,x} \vert - y_m^2 x^TPx
\end{equation}
which is not, in general, concave in $x$. However, in the event that the absolute values around each linear term in the sum can be dropped (i.e. each $\ang{q_m,x}$ is either nonnegative or nonpositve valued on $X$), then we may apply Algorithm \ref{quad_trans_algo} effectively. 

\section{Region checking algorithm}\label{mains}

Henceforth we will assume the objective function $F$ is as in (\ref{original_problem}) and, therefore, it can be rewritten in the sum-of-ratios form (\ref{rearrange}). The first observation to make is that combining Algorithm \ref{quad_trans_algo} with the Dinkelbach method yields an algorithm which converges to the global maximum of (\ref{rearrange}). Indeed, if $x^*$ is the local optimum found by Algorithm \ref{quad_trans_algo}, we may set $\lambda^* = F(x^*)$ and compute both $\pi(\lambda^*)$ and $x(\lambda^*)$. If $\vert \pi(\lambda^*) \vert > \varepsilon$ where $\varepsilon$ is some tolerance, we again run Algorithm \ref{quad_trans_algo} with $x(\lambda^*)$ as the initial feasible point and repeat. This method is guaranteed to converge to the globally optimal solution since 
\[
x(\lambda^*)^T(Q - \lambda^* P)x(\lambda^*) > x^{*T}(Q - \lambda^* P)x^*
\]
implies that 
\[
F(x(\lambda^*)) > F(x^*) \geq F(x_0)
\]
where $x_0$ is the initial feasible point used in Algorithm \ref{quad_trans_algo}.

Finding the value of $\pi(\lambda^*)$ and the vector $x(\lambda^*)$ can be achieved by the methods introduced in \cite{gotoh_konno_2001} and refined in \cite{yamamoto_konno_2007}, but maximizing the Quadratic Transform as in Algorithm \ref{quad_trans_algo} cannot. To circumvent this issue, we divide the feasible region into at most $2^{rank(Q)}$ subregions and perform this algorithm independently in each subregion. Suppose, as before, that 
\[
Q = \sum_{m = 1}^M q_mq_m^T,
\]
and for each $1 \leq m \leq M$ define
\[
R_m^0 = \{x \in X \mid \ang{q_m,x} \leq 0\},\;\; R_m^1 = \{x \in X \mid \ang{q_m,x} \geq 0\}.
\]
For each binary sequence $(n_m)_{m = 1}^M = n \in \{0,1\}^M$, we denote by $R^n$ a (possibly empty) subregion of $X$ given by 
\[
R^n = \bigcap_{m = 1}^M R^{n_m}_m. 
\]
A straightforward observation then yields the following result.

\begin{lemma}
The Quadratic Transform \emph{(\ref{quad_trans_rearrange})} of \emph{(\ref{rearrange})} is concave in the $x$ variable over each nonempty subregion $R^n$ for $n \in \{0,1\}^M$. 
\end{lemma}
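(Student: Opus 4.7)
The plan is to exploit the defining property of the region $R^n$: it is precisely the subset of $X$ on which the sign of each linear form $\ang{q_m,x}$ is fixed. On $R^n$, for every $1 \leq m \leq M$, one has $|\ang{q_m,x}| = \varepsilon_m \ang{q_m,x}$ with $\varepsilon_m = 1$ if $n_m = 1$ and $\varepsilon_m = -1$ if $n_m = 0$. Consequently, on $R^n$ the quadratic transform reads
\[
g(x,y) = \sum_{m=1}^M 2y_m \varepsilon_m \ang{q_m,x} \;-\; \Bigl(\sum_{m=1}^M y_m^2\Bigr) x^T P x.
\]

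First I would argue that the first summand is affine in $x$ for fixed $y$, since it is a linear combination of the linear functionals $x \mapsto \ang{q_m,x}$; in particular it is concave. Next, since $P$ is positive definite and $\sum_m y_m^2 \geq 0$, the quadratic form $x \mapsto \bigl(\sum_m y_m^2\bigr) x^T P x$ is convex, so its negative is concave. Adding the two yields concavity of $g(\cdot,y)$ on $R^n$.

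The one point that requires a brief remark is that the notion of concavity over $R^n$ is meaningful because $R^n$ is convex: it is the intersection of the convex set $X$ with the finitely many half-spaces $\{x : \ang{q_m,x} \leq 0\}$ or $\{x : \ang{q_m,x} \geq 0\}$ prescribed by the bits $n_m$. There is no real obstacle here; the entire content of the lemma is that removing the absolute value signs, which is legitimate once the sign of each $\ang{q_m,x}$ is locked in by the region, turns the nonconcave term $2y_m|\ang{q_m,x}|$ into a linear, hence concave, function of $x$.
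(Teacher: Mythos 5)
Your proof is correct and follows essentially the same route as the paper's: fixing the sign of each $\ang{q_m,x}$ on $R^n$ turns $|\ang{q_m,x}|$ into a linear function, and the remaining $-y_m^2\, x^T P x$ terms are concave since $P$ is positive definite. The added remark that $R^n$ is convex (as an intersection of $X$ with half-spaces) is a worthwhile, if minor, point the paper leaves implicit.
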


\begin{proof}

    Let $g(x,y)$ be as in (\ref{quad_trans_rearrange}) and fix a binary sequence $n \in \{0,1\}^M$. Then, by definition of $R^n$, we see that for each $1\leq m \leq M$ we have either $\ang{q_m,x} \geq 0$ or $\ang{q_m,x} \leq 0$ for all $x \in R^n$. This implies that $|\ang{q_m,x}|$ is linear over $R^n$, and hence $g(x,y)$ is a sum of concave functions, which is itself concave. 
\end{proof}

We can now formulate our method for solving the convex-convex quadratic fractional programming problem as Algorithm \ref{our_algo}.

\IncMargin{1em}
\begin{algorithm}
\SetKwData{Left}{left}\SetKwData{This}{this}\SetKwData{Up}{up}
\SetKwFunction{Union}{Union}\SetKwFunction{FindCompress}{FindCompress}
\SetKwInOut{Input}{input}\SetKwInOut{Output}{output}
\Input{Matrices $Q = \sum_{m = 1}^M q_mq_m^T, P$, linear constraints $Ax \leq b$, and a tolerance $\varepsilon$.}
\Output{Global Solution of (\ref{original_problem})}
\BlankLine
\For{$n \in \{0,1\}^M$}{
\If{$R^n \neq \emptyset$}{
Find an initial $x_n \in R^n$\;
\Repeat{\emph{$\vert \pi(\lambda)\vert < \varepsilon$}}{
Find stationary point $x^*$ of (\ref{original_problem}) via Algorithm \ref{quad_trans_algo} with initial point $x_n$\;
Set $\lambda = F(x^*)$\;
Compute $\pi(\lambda)$ and $x(\lambda)$\;
Update $x_n = x(\lambda)$\;
}
}
}
Determine $\max{F(x_n)}$\;
\caption{Region-Checking Algorithm for Convex-Convex Quadratic Fractional Programming
}
\label{our_algo}
\end{algorithm}\DecMargin{1em}

\begin{remark}\label{tpm}
The utility of Algorithm \ref{our_algo} is, in full generality, limited by the rank of $Q$ or, more precisely, by the number of nonempty subregions (which is controlled by the rank of $Q$). It is also worth noting that different decompositions of $Q$ may yield different numbers of nonempty subregions of the feasible region. We leave the question of how to find better decompositions of $Q$ beyond the scope of this paper. We do, however, mention explicitly the case when $Q$ is a totally nonnegative matrix and the optimization problem in question is
\begin{equation*}
\begin{aligned}
\max &\quad F(x) = \frac{x^T Q x}{x^T P x} \\
\text{s.t.} &\quad Ax = b\\
            &\quad 0 \leq x \leq \alpha.
\end{aligned}
\end{equation*}

In this case, we write $Q = LDL^T$ for $D$ a diagonal matrix and $L$ a lower triangular matrix. This gives a decomposition of $Q$ as
\[
Q = \sum_{m = 1}^M d_m \ell_m \ell_m^T,
\]
where $\ell_m$ and $d_m$ are, respectively, the $m^{th}$ column of $L$ and $m^{th}$ diagonal entry of $D$.
Since $Q$ is totally nonnegative, the entries of each vector $\ell_m$ are nonnegative \cite{goodearl_lenagan_2012}, in which case $\ang{\ell_m,x} \geq 0$ for all feasible $x$. Thus, there is only one subregion $R^n$ of the feasible region which is nonempty. 
\end{remark}

It is quite possible that Algorithm \ref{our_algo} is considerably slower than the algorithms proposed in \cite{gotoh_konno_2001} and \cite{yamamoto_konno_2007} given the potentially large number of times $\pi(\lambda)$ is computed. However, as we will show empirically in the next section, often the first local maximum found by Algorithm \ref{our_algo} in a given region is, in fact, the global maximum of the region. Thus, we will also compare the efficiency and accuracy of Algorithm \ref{our_algo} without computing $\pi(\lambda)$ and assuming each $x^*$ found by Algorithm \ref{quad_trans_algo} is a global maximum of the region. This modification of Algorithm \ref{our_algo} is  Algorithm \ref{our_algo2} below.
\IncMargin{1em}
\begin{algorithm}
\SetKwData{Left}{left}\SetKwData{This}{this}\SetKwData{Up}{up}
\SetKwFunction{Union}{Union}\SetKwFunction{FindCompress}{FindCompress}
\SetKwInOut{Input}{input}\SetKwInOut{Output}{output}
\Input{Matrices $Q = \sum_{m = 1}^M q_mq_m^T, P$, linear constraints $Ax \leq b$, and a tolerance $\varepsilon$.}
\Output{Local Solution of (\ref{original_problem})}
\BlankLine
\For{$n \in \{0,1\}^M$}{
\If{$R^n \neq \emptyset$}{
Find an initial $x_n \in R^n$\;
Find stationary point $x^*$ of (\ref{original_problem}) via Algorithm \ref{quad_trans_algo} with initial point $x_n$\;
Update $x_n = x^*$\;
}
}
Determine $\max{F(x_n)}$\;
\caption{Fast Region-Checking Algorithm}\label{our_algo2}
\end{algorithm}\DecMargin{1em}

\begin{remark}
Suppose $Q = qq^T$ is a rank-one matrix. We write the objective function as
\[
F(x) = \frac{\ang{q,x}^2}{x^TPx}.
\]
In this case, there are only two subregions of the feasible region: $R^0$ and $R^1$. Also, we may equivalently maximize the square-root of the objective, which is given by
\[
\sqrt{F(x)} = \begin{cases}
-\ang{q,x}/\sqrt{x^TPx} &x \in R^0\\
\ang{q,x}/\sqrt{x^TPx} &x \in R^1.
\end{cases}
\]
Thus, $\sqrt{F(x)}$ is a concave-convex fractional programming problem in each subregion of the feasible region, and hence can be solved by two applications of Algorithm \ref{ibaraki} where computing $\pi(\lambda)$ is a concave programming problem. This method is superior to Algorithm \ref{our_algo2} as it was shown in \cite{shen_yu_2018} that Algorithm \ref{quad_trans_algo} is slower than the standard Dinkelbach method for standard single-ratio concave-convex fractional programs.
\end{remark}

\section{Numerical Experiments}\label{num}
In this section we conduct several numerical experiments on the following optimization problem:
\[
\begin{aligned}
\max &\quad F(x) = \frac{x^T Q x}{x^T P x} \\
\text{s.t.} &\quad Ax \leq b, \;\; A \in \R^{T\times n}, \;\; b \in \R^T \\
            &\quad \sum_{i = 1}^n x_i = 1 \\
            &\quad 0 \leq x_i \leq 0.1, \;\; i = 1,...,n
\end{aligned}
\]
where $A$ and $b$ have random entries in the interval $[0,1]$ so that the vector $(1/n,...,1/n)$ is feasible, and $Q = X^TX$ and $P = Y^TY$ where $X$ and $Y$ are, respectively, an $M \times n$ and an $n \times n$ random matrix with entries in $[0,1]$. Unless otherwise specified, we will always decompose $Q$ as a sum of rank one matrices according to its eigendecomposition as noted in Section 2. 

We first demonstrate the efficiency and accuracy of Algorithms \ref{our_algo} and \ref{our_algo2} against Algorithm \ref{ibaraki} for various combinations of $(n,M,T)$. Next, we examine the average number of nonempty subregions in Algorithms \ref{our_algo} and \ref{our_algo2}. Finally, we demonstrate the accuracy of Algorithm \ref{our_algo2} in a full-rank example (i.e. $r = n$, which has several local maxima) when there are a small number of subregions (see Remark \ref{tpm}).

All computation was done in MATLAB (on AMD A6-7400K Radeon R5 4.09 GHz processor), using Gurobi 9.5 interface to solve the nonconvex quadratic programming problems involved in Algorithms \ref{ibaraki} and \ref{our_algo}. The tolerance $\varepsilon$ is always set as $\varepsilon = 10^{-3}$. All values in the forthcoming tables are averages of five tests.

\subsection{Algorithm Comparison}
We first give a demonstration of how differing combinations of $(n,M,T)$ affect the computation time of Algorithm \ref{our_algo} in a single nonempty region of the feasible set.
\begin{table*}
\centering
\ra{1.2}
\setlength\tabcolsep{2mm}
\begin{tabular}{@{}rrrrrr|rrrr@{}}\toprule
&  & \multicolumn{4}{c}{CPU Time (sec)} &  \multicolumn{4}{c}{Iterations}  \\ \cmidrule{3-6} \cmidrule{7-10} 
& & $T = 1$ & $T=10$ & $T=30$ & $ T = 50$ & $T=1$ & $T=10$ & $T = 30 $& $T=50$ \\ \midrule
$n = 25$ & $M = 2$ & 0.4657 & 0.3689 & 0.7372 & 0.7063 & 1.2 & 1 & 1 & 1\\
 & 5 & 1.074 & 0.5517 & 1.183 & 0.7725 &  1 & 1.2 & 1 & 1 \\
 & 7 &  1.037 & 2.339 & 1.473 & 1.337 &  1 & 1 & 1 & 1\\
 & 10& 0.5639 & 1.253 & 9.06 & 1.849 & 1.2 & 1 & 1.4 & 1.2
 \\
 ~&~&~&~&~&~&~&~&~&~
 \\
50 & 2 &  2.022 & 2.512 & 2.403 & 5.308 &  1.2 & 1.2 & 1 & 1.2\\
 & 5&3.208 & 6.842 & 15.71 & 19.66 &  1 & 1 & 1.2 & 1\\
 & 7 & 60.85 & 5.055 & 6.284 & 31.49 & 1.4 & 1 & 1.2 & 1\\ 
 ~ & 10 & 8.032 & 126.5 & 663.4 & 49.52 &  1.2 & 1 & 1.4 & 1\\
  ~&~&~&~&~&~&~&~&~&~
 \\
 75 & 2 & 4.261 & 7.612 & 11.04 & 7.077 &  1 & 1 & 1.2 & 1 \\ 
~ & 5 & 5.299 & 32.95 & 58.67 & 79.35 &  1 & 1.2 & 1.4 & 1 \\ 
~ & 7 & 125.9 & 62.65 & 48.03 & 273.3 &  1.4 & 1.2 & 1.4 & 1 \\ 
~ & 10 & 23.8 & 20.38 & 93.34 & 904.7 &  1 & 1 & 1 & 1.4 \\
 \bottomrule
\end{tabular}
\caption{Efficiency of Algorithm 3 for one region}
\label{onereg}
\end{table*}

 Table \ref{onereg} shows that the computation time for Algorithm \ref{our_algo} increases sharply with the number of inequality constraints due to the increasing complexity of solving the nonconvex quadratic subproblems. Likewise, the computation time increases with both the number of variables and the rank of $Q$, albeit not as sharply. Second, none of the problems solved in this experiment took more than two iterations of Algorithm \ref{quad_trans_algo} to converge. In fact, 209 of the 240 problems solved in this experiment converged in just one iteration of Algorithm \ref{quad_trans_algo}. This suggests that the expensive procedure of computing $\pi(\lambda)$ can safely be dropped from the algorithm, if a low probability of missing the exact solution may be tolerated by the application.
 
 In the next two experiments, we fix $T = 10$. We now compare Algorithm \ref{our_algo} with the Algorithm \ref{ibaraki}. We show the results for Algorithm \ref{our_algo} both converging to the global solution and forcing only one iteration per region without computing $\pi(\lambda)$. 
 
\begin{table*}
\centering
\ra{1.2}
\setlength\tabcolsep{2mm}
\begin{tabular}{@{}rrrrr|r@{}}\toprule
& & \multicolumn{3}{c}{CPU Time (sec)} &   \multicolumn{1}{c}{Alg 4 Error}  \\ \cmidrule{3-6}
& & Alg 1 & Alg 3 & Alg 4  & Error ($\%$)   \\ \midrule
$n = 10$ & $M = 2$ &  0.01372 & 0.07733 & 0.09616 & 0 \\ 
        ~ & 3 &  0.006822 & 0.01969 & 0.1347 & 0 \\ 
        ~ & 4 &  0.005291 & 0.02188 & 0.2637 & 0 \\ 
        ~ & 5 &  0.01414 & 0.02462 & 0.5047 & 0 \\ 
        ~ & 7 &  0.01159 & -- & 2.006 & 0 \\ 
        ~ & 10 &  0.01003 & -- & 16.67 & 0 \\ 
        ~ & ~ & ~ & ~ & ~ & ~ \\
        25 & 2 &  0.5673 & 0.9934 & 0.1193 & 0 \\ 
        ~ & 3 &  0.6599 & 1.266 & 0.1932 & 0.08 \\ 
        ~ & 4 &  1.068 & 3.975 & 0.3618 & 0 \\ 
        ~ & 5 &  1.105 & 26.85 & 0.7185 & 0.06 \\ 
        ~ & 7 &  0.7177 & -- & 2.976 & 0.02 \\ 
        ~ & 10 &  0.8667 & -- & 23.35 & 0.02 \\ 
        ~ & ~ & ~ & ~ & ~ & ~ \\
        50 & 2 &  4.932 & 4.649 & 0.1331 & 0.04 \\ 
        ~ & 3 &  5.258 & 11.6 & 0.2104 & 0.05 \\ 
        ~ & 4 &  9.485 & 72.51 & 0.4216 & 0.02 \\ 
        ~ & 5 &  7.618 & 223.9 & 0.91 & 0.01 \\ 
        ~ & 7 &  5.573 & -- & 3.18 & 0.03 \\ 
        ~ & 10 &  8.004 & -- & 26.5 & 0 \\ 
        ~ & ~ & ~ & ~ & ~ & ~ \\
        75 & 2 &  12.63 & 17.12 & 0.1229 & 0.01 \\ 
        ~ & 3 &  14.43 & 273.3 & 0.346 & 0 \\ 
        ~ & 4 &  30.47 & 211.2 & 0.531 & 0.2 \\ 
        ~ & 5 &  45.63 & 868.8 & 1.249 & 0.07 \\ 
        ~ & 7 &  34.33 & -- & 4.407 & 0 \\ 
        ~ & 10 & 53.35 & -- & 35.55 & 0.05 \\
 \bottomrule
\end{tabular}
\caption{Comparison of Algorithms in CPU Time (sec) \\ Error = $(F(x_{opt}) - F(x))/F(x_{opt}) $}
\label{comparison}
\end{table*}
 
 Table \ref{comparison} shows the comparison of Algorithms \ref{ibaraki}, \ref{our_algo}, and \ref{our_algo2}. There are several things to note about these results. First, the error incurred from performing only one iteration per region is negligible; it is always under one percent and quite often is under 0.1 percent. This implies that one need only perform Algorithm \ref{quad_trans_algo} once in each subregion of the feasible region, making irrelevant the need to compute $\pi(\lambda)$. Second, for a given value of $M$, the computation time needed to complete Algorithm \ref{quad_trans_algo} increases steadily as $n$ increases, but does so at a much slower rate for  Algorithm \ref{our_algo2}. 
 
 In fact, Table \ref{large} shows that Algorithm \ref{our_algo2} can be used efficiently for large scale problems when the number of subregions is less than 130. For comparison, the authors in \cite{yamamoto_konno_2007} state that the case when $n = 500$ is within reach via their algorithm only by employing an elaborate local search. 
 \begin{table}[h]
    \centering
    \ra{1.2}
\setlength\tabcolsep{2.2mm}
    \begin{tabular}{rrrr}
    \hline
         &  &  & Algorithm \ref{our_algo2} \\ \hline
        $n = 250$ & $M = 7$ & $T = 10$ & 22.32 \\ 
        500 & 7 & 10 & 117.34 \\ 
        750 & 7 & 10 & 357.42 \\ 
        1000 & 7 & 10 & 743.24 \\ \hline
    \end{tabular}
    \caption{CPU Time (sec) for Algorithm \ref{our_algo2}}
    \label{large}
\end{table}
 
\subsection{Accuracy of Algorithm 4}
The numerical experiments above call into question the accuracy of Algorithm \ref{our_algo2} in high-rank problems with a small number of nonempty regions. Table \ref{onereg} shows that there is typically only one local maximum per subregion, and this may seem to be attributable to the large number of subregions relative to the number of local maxima. Thus, one might conclude that it may be disadvantageous to choose a decomposition of $Q$ with fewer terms as this leads to fewer possible subregions. We aim to show that this is not the case. 

\begin{table*}
\centering
\ra{1.2}
\setlength\tabcolsep{2mm}
\begin{tabular}{@{}rrrrrrr@{}}\toprule
&  & \multicolumn{5}{c}{Number of Subregions}   \\ \cmidrule{3-7} 
& & $T = n/2$ & $T=n$ & $T=3n/2$ & $ T = 2n$ & $T=5n/2$  \\ \midrule
$n = 30$ & $M = 2$ & 2 & 2 & 2 & 1 & 1 \\
 ~ & 3 & 4 & 4 & 3.8 & 2 & 1 \\ 
        ~ & 5 & 16 & 16 & 12 & 3 & 1 \\ 
        ~ & 7 & 64 & 64 & 49.4 & 11 & 1 \\ 
        ~ & ~ & ~ & ~ & ~ & ~ & ~ \\ 
50 & 2 & 2 & 2 & 2 & 1.8 & 1 \\ 
        ~ & 3 & 4 & 4 & 4 & 2.8 & 1 \\ 
        ~ & 5 & 16 & 16 & 14.4 & 5 & 1 \\ 
        ~ & 7 & 64 & 64 & 64 & 19.4 & 1 \\ 
        ~ & ~ & ~ & ~ & ~ & ~ & ~ \\ 
        100 & 2 & 2 & 2 & 2 & 1.4 & 1 \\ 
        ~ & 3 & 4 & 4 & 4 & 2.8 & 1 \\ 
        ~ & 5 & 16 & 16 & 16 & 6.6 & 1 \\ 
        ~ & 7 & 64 & 64 & 64 & 23.4 & 1 \\ 
        ~ & ~ & ~ & ~ & ~ & ~ & ~ \\ 
        150 & 2 & 2 & 2 & 2 & 1.2 & 1 \\ 
        ~ & 3 & 4 & 4 & 4 & 2.8 & 1 \\ 
        ~ & 5 & 16 & 16 & 16 & 8 & 1 \\ 
        ~ & 7 & 64 & 64 & 64 & 13.6 & 1 \\ \hline
 \bottomrule
\end{tabular}
\caption{Average number of subregions checked by Algorithm \ref{our_algo2}}
\label{regions}
\end{table*}

The number of subregions is, of course, determined by the linear constraints. Table \ref{regions} shows that the number of subregions is usually $2^{rank(Q) - 1}$, and this decreases only when the number of inequality constraints is much larger than the number of variables. This implies that the number of subregions checked by the algorithm increases exponentially with the rank of $Q$, and thus the accuracy of Algorithm \ref{our_algo2} could be a product of the brute-force nature of checking each region. 

For illustration we construct a full-rank example with a small number of subregions to check. For a given value of $n$, we generate $n$ random, linearly independent vectors $(q_i)_{i = 1}^n$ with values in the unit interval $[0,1]$. We construct the matrix $Q$ as 
\[
Q = \sum_{i = 1}^n q_i q_i^T.
\]
Note that since the $q_i$ are constructed to be linearly independent, the matrix $Q$ will be invertible. Using this decomposition of $Q$ and the constraints as before, there will be only one non-empty subregion of the feasible region. This is because the entries of each $q_i$ are positive, and hence $\ang{q_i,x} \geq 0$ for all feasible $x$. All other matrices are constructed in the same manner as before. 

\begin{table*}
\centering
\ra{1.2}
\setlength\tabcolsep{2mm}
\begin{tabular}{@{}rrrr|r@{}}\toprule
& & \multicolumn{2}{r}{CPU Time (sec)} &   \multicolumn{1}{c}{Alg 4 Error}  \\ \cmidrule{3-5}
& & Alg 1 &  Alg 4  & Error ($\%$)   \\ \midrule
$n = 20$ & $T = 1$ & 0.2189 & 0.01942 & 0 \\ 
        ~ & 10& 0.6094 & 0.02111 & 0 \\ 
        ~  & 30& 0.9902 & 0.02257 & 0.02936 \\ 
        ~ &50& 1.766 & 0.04667 & 0 \\ 
        ~ & ~ & ~ & ~ \\ 
        35 & 1& 1.423 & 0.02659 & 0.08328 \\ 
        ~ &10& 6.945 & 0.02921 & 0.01075 \\ 
        ~ &30& 7.726 & 0.03527 & 0.05105 \\ 
        ~ &50& 20.61 & 0.05936 & 0.02039 \\ 
        ~ & ~ & ~ & ~ \\ 
        50 &1& 6.25 & 0.03352 & 0 \\ 
        ~ &10& 6.753 & 0.03664 & 0.02419 \\ 
        ~ &30& 25.31 & 0.04325 & 0.01203 \\ 
        ~ &50& 189 & 0.09199 & 0.01428 \\ 
 \bottomrule
\end{tabular}
\caption{CPU Time (sec) and Error in Full Rank Example}
\label{fullrank}
\end{table*}

It is shown in Table \ref{fullrank} that the accuracy of Algorithm \ref{our_algo2} remains quite high using this decomposition of $Q$. So, the accuracy of Algorithm \ref{our_algo2} should not be attributed primarily to the number of subregions. Therefore, since choosing a decomposition of $Q$ which results in few nonempty subregions yields a faster algorithm, it is advantageous to choose one which yields the fewest number of nonempty subregions of the feasible region. 

\section{Conclusion}\label{conc} 
We have presented an efficient and accurate algorithm for globally maximizing low-rank convex-convex quadratic fractional programming problems. We have also demonstrated that this algorithm can be utilized in high-rank problems if the numerator admits a decomposition which divides the feasible region into a small number of subregions. Although Algorithm \ref{our_algo2} is only guaranteed to converge to a local maximum of (\ref{original_problem}), we have shown heuristically that it almost always converges to the global solution. 

To guarantee global convergence of Algorithm \ref{our_algo2}, one needs only to have a method to determine if there is a feasible $x \in X\cap R$ such that $F(x) > F(x^*)$ where $x^*$ is the local maximum found in the subregion $R$ by Algorithm \ref{quad_trans_algo}. One way this may be done is by using a solver to maximize the nonconvex quadratic
\[
G(x) = x^T(Q - F(x^*)P)x
\]
with the added quadratic constraint that $G(x) > 0$, and artificially terminating the solver once a feasible $x$ is found. If no such $x$ can be found, then $x^*$ is the global solution in the subregion. 

Finally, we remark that Algorithm \ref{our_algo2} is also applicable to the quadratic sum-of-ratios problems, i.e. in the case when the matrix  $P$ in the denominator of \eqref{rearrange} is allowed to vary with $m$ (in fact, the denominators need not be quadratic, they just need to be convex).

\bigskip

\noindent {\bf{Acknowledgement}.} Both authors of the paper were supported in part by the NSF grant DMS-2208031. The paper is dedicated to the everlasting memory of Guido L.~Weiss whose research, teaching, and friendship has inspired generations.

\bibliographystyle{plain}
\bibliography{bibliography}

\end{document}